\documentclass{daj}

\usepackage{enumitem}
\usepackage{amssymb}
\usepackage{amsthm}
\usepackage{mathtools}
\usepackage{bookmark}


\DeclareMathOperator{\rev}{rev}
\newcommand{\seq}{\subseteq}
\newcommand{\ol}{\overline}
\newcommand{\cC}{{\mathcal C}}
\newcommand{\cG}{{\mathcal G}}
\newcommand{\cH}{{\mathcal H}}
\newcommand{\cP}{{\mathcal P}}
\newcommand{\cR}{{\mathcal R}}
\newcommand{\cS}{{\mathcal S}}
\newcommand{\cT}{{\mathcal T}}


\newtheorem{theorem}{Theorem}

\newtheorem{proposition}[theorem]{Proposition}

\dajAUTHORdetails{%
  title = {A Short Proof of the Middle Levels Theorem}, 
  author = {Petr Gregor, Torsten M\"utze, and Jerri Nummenpalo},
  plaintextauthor = {Petr Gregor, Torsten Mutze, Jerri Nummenpalo},
    %
    %
    %
    %
    %
   %
  keywords = {Middle levels conjecture, Hamilton cycle, hypercube, vertex-transitive},
}   

\dajEDITORdetails{%
   year={2018},
   number={8},
   received={10 January 2018},   
   published={21 May 2018},  
   doi={10.19086/da.3659},       
}   

\begin{document}

\begin{frontmatter}[classification=text]

\title{A Short Proof of the Middle Levels Theorem} 

\author[pgregor]{Petr Gregor\thanks{Department of Theoretical Computer Science and Mathematical Logic, Charles University, 11800 Praha 1, Czech Republic, {\tt gregor@ktiml.mff.cuni.cz}}}
\author[tmuetze]{Torsten M\"utze\thanks{Institut für Mathematik, TU Berlin, 10623 Berlin, Germany, {\tt muetze@math.tu-berlin.de} }}
\author[njerri]{Jerri Nummenpalo\thanks{Department of Computer Science, ETH Zürich, 8092 Zürich, Switzerland, {\tt njerri@inf.ethz.ch}}}

\begin{abstract}
Consider the graph that has as vertices all bitstrings of length~$2n+1$ with exactly~$n$ or~$n+1$ entries equal to~1, and an edge between any two bitstrings that differ in exactly one bit.
The well-known middle levels conjecture asserts that this graph has a Hamilton cycle for any~$n\geq 1$.
In this paper we present a new proof of this conjecture, which is much shorter and more accessible than the original proof.
\end{abstract}
\end{frontmatter}

\section{Introduction}
\label{sec:intro}

The question whether a graph has a Hamilton cycle or not is one of the oldest and most fundamental problems in graph theory, with a wide range of practical applications.
Hamilton cycles are named after the Irish mathematician Sir William Rowan Hamilton, who lived in the 19th~century and who invented a puzzle that consists of finding such a cycle in the graph of the dodecahedron.
There are plenty of other families of highly symmetric graphs for which the existence of Hamilton cycles is a notoriously hard problem.
Consider e.g.\ the graph~$G_n$ that has as vertices all bitstrings of length~$2n+1$ with exactly~$n$ or~$n+1$ entries equal to~1, and an edge between any two bitstrings that differ in exactly one bit.
The graph~$G_n$ is a subgraph of the $(2n+1)$-dimensional hypercube, or equivalently, of the cover graph of the lattice of subsets of a $(2n+1)$-element ground set ordered by inclusion.
The well-known \emph{middle levels conjecture} asserts that $G_n$ has a Hamilton cycle for every~$n\geq 1$.
This conjecture is a special case of Lov{\'a}sz' conjecture on the Hamiltonicity of connected vertex-transitive graphs~\cite{MR0263646}, which can be considered the most far-ranging generalization of Hamilton's original puzzle.
The middle levels conjecture was raised in the 80s~\cite{MR737021,MR737262}, and has been attributed to Erd{\H{o}}s, Trotter and various others~\cite{MR962224}.
It also appears in the popular books~\cite{MR2034896,MR3444818,MR2858033} and in Gowers' recent expository paper~\cite{MR3584100}.
This seemingly innocent problem has attracted considerable attention over the last 30 years (see e.g.~\cite{MR1275228,MR1350586,MR1329390,MR1268348,MR2046083}), and a positive solution has been announced only recently.

\begin{theorem}[\cite{MR3483129}]
\label{thm:main}
For any~$n\geq 1$, the graph~$G_n$ has a Hamilton cycle.
\end{theorem}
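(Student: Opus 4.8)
The plan is to exhibit an explicit spanning $2$-regular subgraph (a \emph{$2$-factor}) $\cC$ of $G_n$, and then to merge its cycles one pair at a time, by purely local modifications, into a single Hamilton cycle. A useful preliminary is that $G_n$ has a rich automorphism group: it contains the cyclic rotations of bitstrings (an action of $\mathbb{Z}_{2n+1}$), the reversal $x\mapsto\rev(x)$, and the bitwise complementation $x\mapsto\ol x$, the last of which swaps the two sides of the bipartition of $G_n$. Since no bitstring of length $2n+1$ with $n$ or $n+1$ ones is fixed by a nontrivial rotation, the rotation action is free, so each side of the bipartition breaks into $\frac{1}{2n+1}\binom{2n+1}{n}=C_n$ orbits, where $C_n=\frac{1}{n+1}\binom{2n}{n}$ is the $n$-th Catalan number. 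This strongly suggests indexing the cycles of a well-chosen rotation-invariant $2$-factor by a Catalan family, and I would aim for the Dyck paths with $n$ up- and $n$ down-steps.

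For the $2$-factor I would use cyclic bracket matching. Reading a bitstring around the cycle with $1$s as opening and $0$s as closing brackets, a string on the $n$-ones side has exactly one unmatched $0$; flipping it to a $1$ gives a neighbour on the other side, and a symmetric rule (say via complementation, using that $\ol x$ lands on the $n$-ones side) sends each string on the $(n{+}1)$-ones side to a neighbour on the $n$-ones side. The only delicate point in defining $\cC$ is to equip each vertex with two canonical neighbours in a mutually consistent way — for instance by also using the reversed reading direction around the cycle — and once this is arranged, the verification that $\cC$ is $2$-regular and rotation-invariant is routine. Tracing out $\cC$ then shows that its cycles are in bijection with the $C_n$ Dyck paths with $n$ up- and $n$ down-steps; each cycle has length $4n+2$ and runs through all $2n+1$ rotations of a fixed bracket shape together with the moving position of the unmatched bit.

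A Hamilton cycle is then produced by merging the $C_n$ cycles of $\cC$. The device is a \emph{flip}: if $e_1=\{a,b\}$ and $e_2=\{c,d\}$ are edges of $\cC$ in distinct cycles $C,C'$, with $a,c$ on one side of the bipartition and $b,d$ on the other, and if $G_n$ also contains the edges $\{a,d\}$ and $\{b,c\}$, then deleting $e_1,e_2$ and inserting $\{a,d\},\{b,c\}$ turns $C\cup C'$ into a single cycle; call $\{e_1,e_2\}$ a \emph{flippable pair}. Let $\cH$ be the graph on the set of Dyck paths whose edges record which pairs of cycles of $\cC$ admit a flippable pair. If $\cH$ is connected, fix a spanning tree $\cT$; and if the flips realising the edges of $\cT$ can be chosen so that the quadruples of $G_n$-edges they touch are pairwise disjoint, then performing all of them at once keeps the result a $2$-factor and makes it connected — hence a Hamilton cycle. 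Thus everything reduces to: (i) for each Dyck path $D$, finding flippable pairs joining the cycle $C_D$ to the cycles of Dyck paths $D'$ obtained from $D$ by some elementary local move — e.g.\ exchanging an adjacent $\mathtt{DU}$ for $\mathtt{UD}$, under which any two Dyck paths are linked by a sequence of moves; and (ii) making these flips mutually non-interfering along $\cT$.

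The pair (i)--(ii) is the real obstacle. Establishing connectivity of $\cH$ means unwinding the bracket-matching combinatorics carefully enough to exhibit, for each elementary move on Dyck paths, a flippable pair in $G_n$ realising it; and to get edge-disjointness one needs these flippable pairs to live in a small, explicitly described stretch of the cycle $C_D$, so that the flips chosen at a given path $D$ for its various $\cT$-neighbours (and the flips chosen at every other path) use disjoint edges. It is this bookkeeping — matching up the local structure of Dyck-path moves with the local structure of the bracket-matching $2$-factor — rather than any isolated trick, where the weight of the argument lies; the orbit count, the $2$-factor, and the spanning-tree mechanics are all routine once the right encoding is in place.
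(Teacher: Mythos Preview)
Your high-level architecture matches the paper's: build a 2-factor, index its cycles by a Catalan family, merge cycles by local edge swaps governed by a connected auxiliary graph, and pick a spanning tree of non-interfering swaps. Beyond that the two diverge substantially, and your version leaves the essential content undone.

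The paper's 2-factor is not rotation-invariant: it is the union of the $(n{-}1)$- and $n$-lexical matchings, and its cycles are indexed by \emph{plane trees} with $n$ edges (equivalence classes of rooted trees under the rotation $(1,u,0,v)\mapsto(u,1,v,0)$), not by individual Dyck paths, with lengths that are varying multiples of $4n+2$. Your cyclic-bracket 2-factor is never actually defined --- you give one matching (flip the unique cyclically unmatched $0$) and gesture at ``reversed reading direction'' for the second without checking edge-disjointness or the claimed cycle structure. For the merging step the paper uses \emph{6-cycles} (three $*$'s at positions dictated by~\eqref{eq:c6}), not 4-cycle flips; each 6-cycle shares two non-incident edges with one cycle of $\cC_n$ and one edge with another, and Proposition~\ref{prop:6cycles} is devoted to verifying that these 6-cycles are pairwise edge-disjoint and non-interleaving along the paths $P_\sigma(x)$, which is exactly what permits an arbitrary spanning tree of joins to be performed simultaneously. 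Connectivity of the auxiliary graph $\cH_n$ is then a two-line argument on trees (rotate to a leaf root, then pull pending edges until one reaches the star).

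You yourself write that ``the pair (i)--(ii) is the real obstacle'' and that the bookkeeping is ``where the weight of the argument lies''. That diagnosis is correct, but it means what you have submitted is a strategy, not a proof: the entire technical content of the theorem sits inside the steps you have deferred. The paper's contribution is precisely a choice of 2-factor and of joining gadgets for which that bookkeeping becomes short and explicit; you have supplied neither, and there is no a~priori reason your alternative choices (rotation-invariant $\cC$, 4-cycle flips, $\mathtt{DU}\leftrightarrow\mathtt{UD}$ moves) would make it tractable.
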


The proof of Theorem~\ref{thm:main} given in~\cite{MR3483129} is long and technical (40 pages), so the main purpose of this paper is to give a shorter and more accessible proof.
This is achieved by combining ingredients developed in~\cite{MR3738156} with new ideas that allow us to avoid most of the technical obstacles in the original proof.
The new construction also yields the stronger result from~\cite{MR3483129} that the graph~$G_n$ has at least $\frac{1}{4}2^{2^{\lfloor (n+1)/4\rfloor}}=2^{2^{\Omega(n)}}$ different Hamilton cycles.
It also greatly simplifies the constant-time algorithm from~\cite{MR3627876} to generate each bitstring of the corresponding Hamilton cycle and several generalizations of it presented in~\cite{MR3758308}.
Since its first proof, Theorem~\ref{thm:main} has been used as an induction basis to prove several far-ranging generalizations, in particular Hamiltonicity of the bipartite Kneser graphs~\cite{MR3759914}, so our new proof also shortens this chain of arguments considerably.
Moreover, in two subsequent papers we apply the techniques developed here to resolve the case~$k=1$ of a generalized version of the middle levels conjecture where the vertex set of the underlying graph are all bitstrings with exactly~$w$ occurrences of~1 with $w\in\{n-k,\ldots,n+1+k\}$ \cite{jaeger-et-al:18} (the case~$k=0$ is the original conjecture), and to prove that the sparsest Kneser graphs~$K(2n+1,n)$, also known as odd graphs, have a Hamilton cycle for any~$n\geq 3$, settling an old conjecture from the 70s~\cite{muetze-nummenpalo-walczak:18}.

\subsection{Description of the Hamilton cycle}
\label{sec:def-hc}

We start right away by giving an explicit description of a Hamilton cycle in the graph~$G_n$.
The construction proceeds in two steps:
We first define a 2-factor in~$G_n$, i.e., a collection of disjoint cycles which together visit all vertices of the graph.
We then modify this 2-factor locally to join the cycles to a single cycle.

Specifically, the 2-factor~$\cC_n$ is defined as the union of two edge-disjoint perfect matchings in~$G_n$, namely the $(n-1)$-lexical and the $n$-lexical matching introduced in~\cite{MR962224}, which will be defined later.
The modification operation consists in taking the symmetric difference of~$\cC_n$ with a carefully chosen set of edge-disjoint 6-cycles.
Each 6-cycle used has the following properties: it shares two non-incident edges with one cycle~$C$ from the 2-factor~$\cC_n$, and one edge with a second cycle~$C'$ from the 2-factor, such that taking the symmetric difference between the edge sets of~$C,C'$ and the 6-cycle joins~$C$ and~$C'$ to one cycle, see Figure~\ref{fig:c6xy}.
Note that every 6-cycle in~$G_n$ can be described uniquely as a string~$x$ of length~$2n+1$ over the alphabet $\{0,1,*\}$ with $n-1$ occurrences of~1, $n-1$ occurrences of~0 and three occurrences of~$*$.
The 6-cycle corresponding to this string~$x$ is obtained by substituting the three occurences of~$*$ by all six combinations of symbols from~$\{0,1\}$ that use each symbol at least once.
We let~$D_i$ for~$i\ge 0$ denote the set of all bitstrings of length~$2i$ with exactly $i$ occurrences of~1 with the property that in every prefix, the number of 1-entries is at least as large as the number of 0-entries, and we define $D:=\bigcup_{i\geq 0} D_i$ as the set of all such Dyck words.
Let~$\cS_n$ denote the set of all 6-cycles in~$G_n$ encoded by strings of length~$2n+1$
\begin{equation}
\label{eq:c6}
  (u_1,0,u_2,0,\ldots,u_d,0,1,*,*,w,*,v_d,1,v_{d-1},1,\ldots,v_1,1,v_0,0)
\end{equation}
for some~$d\geq 0$ and $u_1,\ldots,u_d,v_0,\ldots,v_d,w\in D$.
We later prove that the 6-cycles from~$\cS_n$ are pairwise edge-disjoint and that this set contains a subset~$\cT_n\seq \cS_n$ such that the symmetric difference of the edge sets~$\cC_n\bigtriangleup \cT_n$ is a Hamilton cycle in~$G_n$.

\subsection{Proof outline}

After setting up some important definitions in Section~\ref{sec:def}, our proof of Theorem~\ref{thm:main} proceeds as follows:
We first establish crucial properties about the 2-factor~$\cC_n$ and about the set of 6-cycles~$\cS_n$ in Sections~\ref{sec:2factor} and \ref{sec:6cycles}, captured in Propositions~\ref{prop:2factor} and \ref{prop:6cycles}, respectively.
In Section~\ref{sec:proof} we combine these properties into the final proof.

\section{Preliminaries}
\label{sec:def}

\textbf{\textit{Bitstrings and Dyck paths.}}
Recall the definition of~$D_n$ from before.
We define the set~$D_n^-$ similarly, but we require that in exactly one prefix, the number of 1-entries is strictly smaller than the number of 0-entries.
We often interpret a bitstring~$x$ in~$D_n$ as a \emph{Dyck path} in the integer lattice~$\mathbb{Z}_2$ that starts at the origin and that consists of $n$~upsteps and $n$~downsteps that change the current coordinate by~$(+1,+1)$ or~$(+1,-1)$, respectively, corresponding to a~1 or a~0 in~$x$, see Figure~\ref{fig:xypair}.
By the prefix property, the corresponding lattice path has no steps below the abscissa.
Similarly, the lattice paths corresponding to bitstrings in~$D_n^-$ have exactly one downstep and one upstep below the abscissa.
We refer to a subpath of~$x$ from the set~$D$ as a \emph{hill} in~$x$.
Any bitstring~$x\in D_n$ can be written uniquely as $x=(1,u,0,v)$ with~$u,v\in D$.
We refer to this as the \emph{canonic decomposition} of~$x$.
For any bitstring~$x$, $\ol{\rev}(x)$ denotes the reversed and complemented bitstring.
In terms of lattice paths, $\ol{\rev}(x)$ is obtained by mirroring~$x$ at a vertical line.
The operation~$\ol{\rev}$ is applied to a sequence or a set of bitstrings by applying it to each entry or each element, respectively.
For a set of bitstrings~$X$ and a bitstring~$x$, we write~$X\circ x$ for the set obtained by concatenating each bitstring from~$X$ with~$x$.
The length of a sequence~$x$ is denoted by~$|x|$.

\textbf{\textit{Rooted trees and plane trees.}}
An~\emph{(ordered) rooted tree} is a tree with a specified root vertex, and the children of each vertex have a specified left-to-right ordering.
We think of a rooted tree as a tree embedded in the plane with the root on top, with downward edges leading from any vertex to its children, and the children appear in the specified left-to-right ordering.
Using a standard Catalan bijection, every Dyck path~$x\in D_n$ can be interpreted as a rooted tree with $n$~edges, see~\cite{MR3467982} and Figure~\ref{fig:xypair}.
Specifically, traversing the rooted tree starting at the root via a depth-first search, visiting the chilren in the specified left-to-right ordering, and writing an upstep for each visit of a child and a downstep for each return to the parent produces the corresponding Dyck path, and similarly vice versa.
A~\emph{rotation operation} moves the root to the leftmost child of the root, yielding another rooted tree, see Figure~\ref{fig:rot}.
Formally, in terms of Dyck paths, rotating the tree with canonic decomposition $(1,u,0,v)$, where~$u,v\in D$, yields the tree~$(u,1,v,0)$.
\emph{Plane trees} are obtained as equivalence classes of rooted trees under rotation, so they have no root, but a cyclic ordering of all neighbors at each vertex.

\textbf{\textit{Lexical matchings.}}
We recap the definition of the $(n-1)$-lexical and $n$-lexical matchings in~$G_n$ from~\cite{MR962224}.
We denote the two matchings as bijections $M,N:B_n\rightarrow B_n'$, where~$B_n$ and~$B_n'$ are the sets of bitstrings of length~$2n+1$ with exactly $n$ or $n+1$ occurrences of~1, respectively.
These sets are the two partition classes of the bipartite graph~$G_n$.
Given~$x\in B_n$, we sort all prefixes of~$x$ ending in~0 in decreasing order according to the surplus of the number of 0-entries compared to the number of 1-entries, breaking ties by sorting according to increasing lengths of the prefixes, yielding a total order on all these prefixes.
Then~$M(x)$ is obtained by flipping the last bit of the second prefix in this total order, and~$N(x)$ is obtained by flipping the last bit of the first prefix in this total order.
E.g., for $x=1101000$ the prefixes are ordered $1101000, 110100, 110, 11010$, so $M(x)=1101010$ and $N(x)=1101001$.
Clearly, $M(x)\neq N(x)$ for all~$x\in B_n$.
It is also easy to check that~$M$ and~$N$ are bijections.
In fact, $M^{-1}$ and $N^{-1}$ are obtained by considering prefixes ending in~1 and by changing only the secondary criterion in the above definition of a total order by sorting according to decreasing (instead of increasing) lengths of the prefixes.
It follows that~$M$ and~$N$ are edge-disjoint perfect matchings in~$G_n$, and their union is our 2-factor~$\cC_n=M\cup N$.

\section{Properties of the 2-factor}
\label{sec:2factor}

As adjacent vertices in~$G_n$ differ only in a single bit, every cycle from the 2-factor~$\cC_n$ can be described concisely by specifying a starting vertex on the cycle, and a sequence of bit positions to be flipped along the cycle until the starting vertex is reached again.
Proposition~\ref{prop:2factor} below states all relevant properties of the 2-factor~$\cC_n$ that we use, and in particular gives such a description of the bitflip sequences that are encountered when following each cycle from our 2-factor~$\cC_n$.
These sequences can be described nicely in terms of vertices of the form~$(x,0)$ where~$x\in D_n$.
Specifically, we define for any~$x\in D_n$ a \emph{bitflip sequence} $\sigma(x)$ as follows:
We consider the canonic decomposition $x=(1,u,0,v)$ and define $a:=1$, $b:=|u|+2$ and
\begin{subequations}
\label{eq:sigma-rec}
\begin{equation}
\label{eq:sigma}
  \sigma(x):=(b,\, a,\, \sigma_{a+1}(u)) \enspace,
\end{equation}
where~$\sigma_a(x')$ is defined for any substring~$x'\in D$ of~$x$ starting at position~$a$ in~$x$ by considering the canonic decomposition $x'=(1,u',0,v')$, by defining $b:=a+|u'|+1$ and by recursively computing
\begin{equation}
\label{eq:sigmaa}
  \sigma_a(x') := \begin{cases}
      () & \text{if } |x'|=0 \enspace, \\
      \big(b,\, a,\, \sigma_{a+1}(u'),\, a-1,\, b,\, \sigma_{b+1}(v')\big) & \text{otherwise} \enspace.
      \end{cases}
\end{equation}
\end{subequations}
Note that in these definitions, $a$ and $b$ are the positions of the first and last bit, respectively, of the substrings $(1,u,0)$ and $(1,u',0)$ in~$x$.
We denote by~$P_\sigma(x)$ the sequence of vertices in the $2n$-cube obtained by starting at the vertex~$x$ and flipping bits one after the other at the positions in the sequence~$\sigma(x)$.
We will prove in Proposition~\ref{prop:2factor} that $P_\sigma(x)\circ 0$ is in fact a path in the middle levels graph~$G_n$.
E.g., if $x=110100$, then we have $\sigma(x)=(6,1,3,2,1,3,5,4,3,5)$, so $P_\sigma(x)=(110100,110101,010101,011101,001101,\ldots,101001)$.

This definition has a straightforward interpretation in terms of Dyck paths.
In~\eqref{eq:sigma}, we consider the first hill $(1,u,0)$ of the Dyck path~$x$, first flip its last step (position~$b$), then its first step (position~$a$), and then recursively steps inside the hill.
In~\eqref{eq:sigmaa}, we consider the first hill $(1,u',0)$ of the Dyck path~$x'$, first flip its last step (position~$b$), then its first step (position~$a$), then recursively steps inside the hill, then the step to the left of the first step (position~$a-1$), then the last step again (position~$b$), and finally we recurse into the remaining part~$v'$.

\begin{proposition}
\label{prop:2factor}
For any~$n\geq 1$, the 2-factor~$\cC_n$ defined in Section~\ref{sec:def-hc} has the following properties:
\begin{enumerate}[label=(\roman*)]
\item Removing from~$\cC_n$ the edges that flip the last bit yields two sets of paths $\cP_n\circ 0$ and $\ol{\rev}(\cP_n)\circ 1$.
\item Each path from~$\cP_n$ starts at a vertex from~$D_n$ and ends at a vertex from~$D_n^-$.
The sets of all first and last vertices are~$D_n$ and~$D_n^-$, respectively.
\item For any path $P\in \cP_n$ and its first vertex $x\in D_n$ we have $P=P_\sigma(x)$ with~$\sigma$ defined in~\eqref{eq:sigma-rec}.
\item For any path $P\in \cP_n$, consider its first vertex $x\in D_n$ and last vertex $y\in D_n^-$.
If $x=(1,u,0,v)$ is the canonic decomposition of~$x$, then we have $y=(u,0,1,v)$.
Moreover, the distance between~$x$ and~$y$ along~$P$ is~$2|u|+2$.
\item For any cycle~$C\in \cC_n$, consider two vertices $(x,0),(y,0)$, where $x,y\in D_n$, that appear consecutively in the subsequence of all vertices of this form along~$C$.
If $x=(1,u,0,v)$ is the canonic decomposition of~$x$, then we have $y=(u,1,v,0)$ (or vice versa).
In terms of rooted trees, $y$ is obtained from~$x$ by a rotation operation.
Moreover, the distance between~$(x,0)$ and~$(y,0)$ along~$C$ is~$4n+2$.
\item The set of cycles of~$\cC_n$ is in bijection with the set of plane trees with $n$~edges.
\end{enumerate}
\end{proposition}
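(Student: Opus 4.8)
\emph{Recasting $M$ and $N$.} The first step is to recast the lexical matchings as lattice-path operations. Reading a bitstring as a lattice path, the total order on prefixes ending in $0$ used to define $M$ and $N$ sorts them by increasing endpoint height, breaking ties by increasing length. Hence for $x\in B_n$ the map $N$ flips the leftmost downstep of $x$ whose endpoint is at the minimum height of $x$, and $M$ flips the next downstep in this order: the second downstep at the minimum, or the leftmost downstep at height one above the minimum if the minimum is attained only once. (I will always verify that an edge of $G_n$ belongs to $M$ or to $N$ from its endpoint in $B_n$, so $M^{-1},N^{-1}$ are not needed.) In particular, for a length-$2n$ bitstring $z$ with $n$ ones, appending a $0$ creates a prefix of surplus $1$ that is the unique minimum exactly when $z\in D_n$ and that is second in the order exactly when $z\in D_n^-$; consequently the last-bit edge $\{(z,0),(z,1)\}$ of $G_n$ lies in $N$ iff $z\in D_n$, lies in $M$ iff $z\in D_n^-$, and lies in neither otherwise, and these are the only last-bit edges present in $G_n$ (when $z$ has $n+1$ ones the vertex $(z,1)$ has too many ones).

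\emph{The tracing lemma.} The heart of the argument is the following claim, proved by induction on $|x'|$: if $w\in B_n$ contains a Dyck word $x'$ in positions $a,\dots,a+|x'|-1$, contains a $0$ in position $a-1$, and this $0$ is the leftmost downstep of $w$ ending at the minimum height of $w$, then starting at $w$ and flipping bits according to $\sigma_a(x')$ runs through vertices of $G_n$ whose successive edges lie alternately in $M$ and in $N$ (the first in $M$), and whose last vertex is obtained from $w$ by cyclically shifting the block in positions $a-1,\dots,a+|x'|-1$ one place to the left. Writing $x'=(1,u',0,v')$ and $b=a+|u'|+1$ and unfolding $\sigma_a(x')=(b,a,\sigma_{a+1}(u'),a-1,b,\sigma_{b+1}(v'))$, one verifies, using the recasting above: flipping bit $b$ is the $M$-edge at $w$ (in $w$ the downstep at $b$ is precisely the second one at the minimum); flipping bit $a$ is then the ensuing $N$-edge, and it re-establishes the hypothesis with $a$ in the role of $a-1$, so the induction hypothesis applies to $\sigma_{a+1}(u')$; after that sub-block the hypothesis holds again, now in a shifted form that makes flipping bit $a-1$ the next $M$-edge; flipping bit $b$ is the ensuing $N$-edge and re-establishes the hypothesis with $b$ in the role of $a-1$, so the induction hypothesis applies to $\sigma_{b+1}(v')$; composing the two sub-shifts with these four flips produces the asserted net shift. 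The vertices traced are pairwise distinct since the walk never backtracks in the $2$-factor $\cC_n$ and does not close up. The bulk of the work is the bookkeeping of how the minimum height and its leftmost downstep migrate under the four flips and under each sub-shift, and the delicate point is verifying that the hypothesis is re-established just before the two middle flips.

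\emph{Items (i)--(iv).} For $\alpha=(1,u,0,v)\in D_n$, the flip of bit $|u|+2$ out of $(\alpha,0)$ is the $M$-edge there (since $\alpha\in D_n$ makes the last-bit edge the $N$-edge at $(\alpha,0)$), and the subsequent flip of bit $1$ is the ensuing $N$-edge; the lemma with $a=2$, applied to the resulting vertex $(0,u,1,v,0)$, then shows that following the non-last-bit edge out of $(\alpha,0)$ traces $P_\sigma(\alpha)\circ 0$, a path with $2|u|+2$ edges from $(\alpha,0)$ to $(\beta,0)$ where $\beta=(u,0,1,v)\in D_n^-$. All edges of this path are non-last-bit, its two ends are incident to a (deleted) last-bit edge, and its interior vertices are not; hence $P_\sigma(\alpha)\circ 0$ is a full component of $\cC_n$ with all last-bit edges deleted. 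Since $(1,u,0,v)\mapsto(u,0,1,v)$ is a bijection $D_n\to D_n^-$ (its inverse reads off the unique decomposition of a $D_n^-$ word), these components have pairwise distinct ends; a count of component-ends gives exactly $|D_n|$ components on the last-bit-$0$ side, namely the $P_\sigma(\alpha)\circ 0$, and a short identity between Catalan numbers shows they together cover all last-bit-$0$ vertices, so no cycles survive. This yields (ii), (iii), (iv) and the $\cP_n\circ 0$ half of (i). For the other half, one checks from the symmetric roles of $0$ and $1$ in the definitions of $M$ and $N$ that $\cC_n$ is invariant under the automorphism $\psi$ of $G_n$ that complements the last coordinate and applies $\ol{\rev}$ to the first $2n$ coordinates; as $\psi$ interchanges the two last-bit sides and sends last-bit edges to last-bit edges, it maps the last-bit-$0$ components to the last-bit-$1$ components, which are therefore $\psi(\cP_n\circ 0)=\ol{\rev}(\cP_n)\circ 1$.

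\emph{Items (v) and (vi).} Walking along a cycle $C$ of $\cC_n$ from $(x,0)$, $x=(1,u,0,v)\in D_n$, we pass through $P_\sigma(x)\circ 0$ to $(\beta,0)$, $\beta=(u,0,1,v)\in D_n^-$, then along the last-bit edge (in $M$) to $(\beta,1)$, then along the last-bit-$1$ component through $(\beta,1)$. By the previous paragraph that component is $\psi$ applied to the last-bit-$0$ component through $(\ol{\rev}(\beta),0)$, which by (iv) starts at $(1,\ol{\rev}(v),0,\ol{\rev}(u))$; so its other endpoint is $(y,1)$ with $y=(u,1,v,0)$, the rotation of $x$, and the last-bit edge (in $N$) at $(y,1)$ returns us to $(y,0)$ since $y\in D_n$. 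No vertex $(z,0)$ with $z\in D_n$ lies strictly between $(x,0)$ and $(y,0)$ on $C$: interior vertices of a component $P_\sigma(\alpha)\circ 0$ have degree $2$ there, hence meet no last-bit edge, hence their length-$2n$ part lies outside $D_n\cup D_n^-$; the remaining intervening vertices have last bit $1$. Thus $(x,0)$ and $(y,0)$ are consecutive among the $D_n$-vertices of $C$, at distance $(2|u|+2)+1+(2|v|+2)+1=4n+2$ along $C$, and $C$ meets $D_n$ exactly in one rotation orbit. Finally every cycle of $\cC_n$ meets $D_n$ (a cycle with no last-bit edge would be a surviving cycle, of which there are none, while a cycle with a last-bit edge contains some component $P_\sigma(\alpha)\circ 0$, hence the vertex $(\alpha,0)$), so $C\mapsto\{z\in D_n:(z,0)\in C\}$ is a bijection from the cycles of $\cC_n$ onto the rotation orbits of rooted trees with $n$ edges, i.e., onto the plane trees with $n$ edges, establishing (v) and (vi).
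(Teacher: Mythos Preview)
Your proof is correct and follows essentially the same strategy as the paper: characterize the last-bit edges of $\cC_n$ via the lattice-path reading of $M$ and $N$, verify that the bitflip sequence $\sigma$ traces the $M,N$-alternating path, use the automorphism $\psi$ (the paper's $f$) for the last-bit-$1$ side, and close with a Catalan count to rule out leftover cycles, then derive (v) and (vi) by chaining two path segments across the symmetry.

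The one organizational difference worth noting is how you establish (iii). The paper introduces \emph{base pairs} $(a,b)$ of $x$ and computes closed-form expressions for the vertices at positions $2a-1$ and $2b-1$ of $P_\sigma(x)$ (its equations for $x'$ and $x''$), then checks for each base pair that the next two flips agree with $M$ and $N^{-1}$. You instead package the verification as an inductive \emph{tracing lemma} for $\sigma_a(x')$, whose hypothesis (``the $0$ at position $a-1$ is the leftmost downstep at the minimum'') is exactly the invariant needed to identify the $M$- and $N$-edges at each stage, and whose conclusion (``cyclic left-shift of the block $a-1,\ldots,a+|x'|-1$'') re-establishes that invariant for the next recursive call. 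This is a cleaner way to organize the same computation, and it makes the net effect of each recursive block explicit. Your counting argument is also slightly different in form: you count only on the last-bit-$0$ side via $\sum_{\alpha\in D_n}(2|u_\alpha|+3)=(2n+1)C_n=\binom{2n+1}{n}$, whereas the paper counts both sides at once using the distance $4n+2$ from (v). Both arrive at $\cR_n=\emptyset$ by the same Catalan identity.
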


\begin{figure}
\centering
\makebox[0cm]{
\includegraphics[scale=0.916]{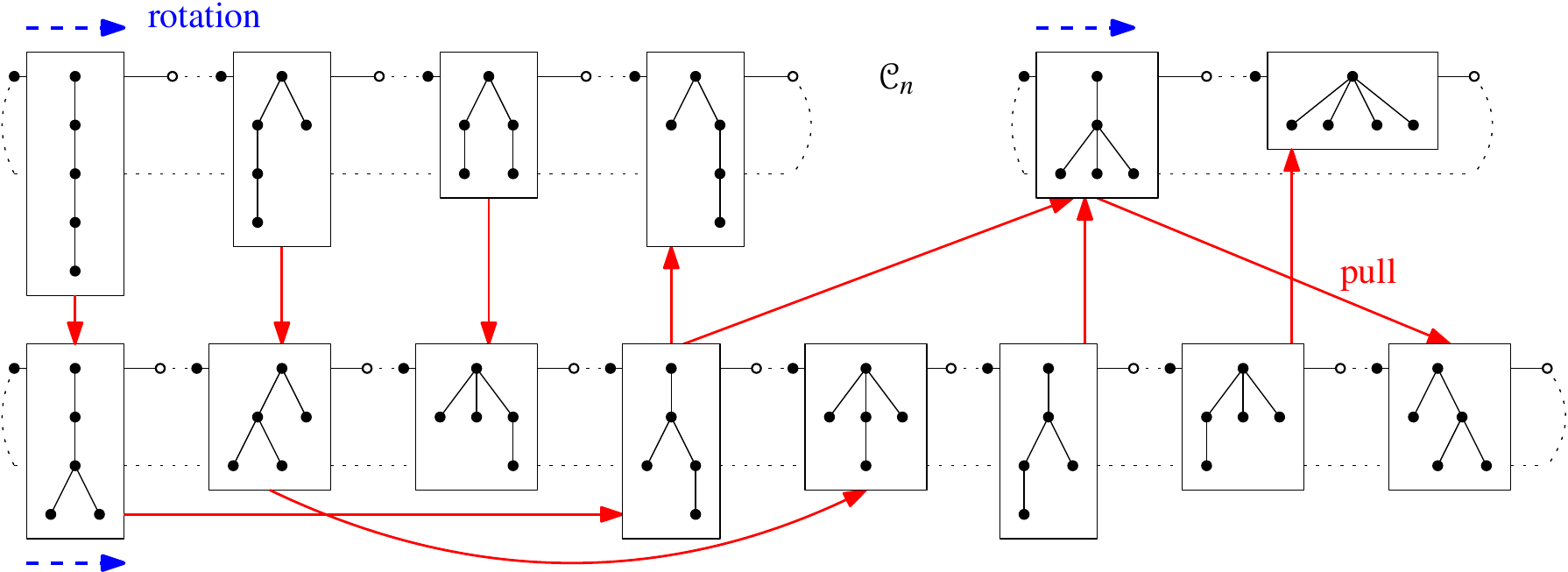}
}
\caption{Cycle structure of the 2-factor~$\cC_n$ and auxiliary graph~$\cH_n$ for~$n=4$.}
\label{fig:rot}
\end{figure}

The interpretation of the cycles of~$\cC_n$ in terms of rooted trees is illustrated in Figure~\ref{fig:rot} (ignore the solid arrows for the moment).

\begin{proof}
To prove~(i), let~$\cC_n^-$ denote the spanning subgraph of~$\cG_n$ obtained from~$\cC_n$ by removing the edges that flip the last bit.
As~$\cC_n$ is a union of cycles, $\cC_n^-$ is a union of paths $\cP_n\circ 0$, $\cP_n'\circ 1$ and possibly some cycles $\cR_n\circ 0$, $\cR_n'\circ 1$.
Consider the automorphism $f(x_1,\ldots,x_{2n+1}):=(\ol{\rev}(x_1,\ldots,x_{2n}),\ol{x_{2n+1}})$ of the graph~$G_n$.
It is easy to check that $f(M)=M$ and $f(N)=N$, implying that $\cP_n'=\ol{\rev}(\cP_n)$ and $\cR_n'=\ol{\rev}(\cR_n)$, so we have
\begin{equation}
\label{eq:Cn-}
  \cC_n^- = (\cP_n\cup \cR_n)\circ 0 \;\cup\; \ol{\rev}(\cP_n\cup \cR_n)\circ 1 \enspace.
\end{equation}
This almost proves~(i).
The only thing left to verify is that $\cR_n=\emptyset$, which will be done later.

To prove (ii)--(iv), consider an end vertex~$x$ of a path from~$\cP_n$.
It corresponds to a vertex $(x,0)\in B_n$ such that either~$M$ or~$N$ flips the last bit of~$(x,0)$.
By the definition of~$M$ and~$N$, this happens if and only if $x\in D_n^-$ or $x\in D_n$, respectively.
Consequently, the end vertices of~$\cP_n$ are given by $D_n\cup D_n^-$.

Now consider a path $P\in \cP_n$ with end vertex $x\in D_n$, and let $x=(1,u,0,v)$ be the canonic decomposition of~$x$.
We now show that $P=P_\sigma(x)$.
Note that every recursion step in the definition~\eqref{eq:sigma-rec} corresponds to a pair of indices $1\leq a<b\leq |u|+2$ in~$x$ such that $(x_a,\ldots,x_b)=(1,w',0)$ with $w'\in D$.
We refer to such a pair~$(a,b)$ as a \emph{base pair of~$x$}.
For any such base pair~$(a,b)$, we can partition~$x$ uniquely as
\begin{equation}
\label{eq:x}
  x=(1,u_1,1,u_2,\ldots,1,u_d,1,w',0,v_d,0,v_{d-1},0,\ldots,v_1,0,v)
\end{equation}
with~$d\geq 0$ and $u_1,\ldots,u_d,v_1,\ldots,v_d\in D$, see Figure~\ref{fig:xypair}.
Note that $a=1+\sum_{i=1}^d(1+|u_i|)$ and $b=a+|w'|+1$.
Let~$x'$ and~$x''$ denote the entries of the sequence~$P_\sigma(x)$ at positions~$2a-1$ and~$2b-1$, respectively.
These are well-defined vertices as~$\sigma(x)$ has length~$2|u|+2$ by definition~\eqref{eq:sigma-rec} and by the inequality $a<b\leq |u|+2$.
Using definition~\eqref{eq:sigma-rec}, a straightforward computation shows that for any base pair~$(a',b')$ and the corresponding substring $(1,u',0)\in D$ of~$x$, applying the bitflips in~$\sigma(x)$ to this substring, every bit~$x_i$ followed by $x_{i+1}=x_i$ is flipped twice, whereas every bit~$x_i$ followed by $x_{i+1}=\ol{x_i}$ is flipped once or three times, depending on whether~$x_i=1$ or~$x_i=0$, respectively.
This effectively shifts the bitstring to the left, yielding $(u',0,x_{b'+1})$.
Using this observation, the vertices~$x'$ and~$x''$ can be computed from~\eqref{eq:x} as
\begin{subequations}
\begin{align}
  x' &= (u_1,0,u_2,0,\ldots,u_d,0,1,w',0,v_d,1,v_{d-1},1,\ldots,v_1,1,v) \enspace, \label{eq:xa} \\
  x'' &= (u_1,0,u_2,0,\ldots,u_d,0,w',0,1,v_d,1,v_{d-1},1,\ldots,v_1,1,v) \enspace. \label{eq:xb}
\end{align}
\end{subequations}
By~\eqref{eq:sigma} and~\eqref{eq:sigmaa}, the next two bits flipped after~$x'$ are at positions~$b$ and~$a$.
Using~\eqref{eq:xa} and the definition of the mappings~$M$ and~$N^{-1}$, these are exactly the two bits flipped along the edge from~$M$ that starts at $(x',0)\in B_n$ and along the edge from~$N$ that starts at $M(x',0)\in B_n'$, respectively.
Similarly, if $b<|u|+2$, then by~\eqref{eq:sigmaa}, the next two bits flipped after~$x''$ are at positions~$a-1$ and~$b$.
Using~\eqref{eq:xb} and the definition of~$M$ and~$N^{-1}$, these are exactly the two bits flipped along the edge from~$M$ that starts at $(x'',0)\in B_n$ and along the edge from~$N$ that starts at $M(x'',0)\in B_n'$, respectively.
As this argument holds for all base pairs~$(a,b)$ of~$x$, we obtain $P=P_\sigma(x)$, proving~(iii).
Applying~\eqref{eq:xb} for the base pair $(a,b)=(1,|u|+2)$ of~$x$ (in this case~$d=0$ and~$w'=u$), the last vertex~$y$ reached on the path $P=P_\sigma(x)$ is $y=(u,0,1,v)\in D_n^-$.
This proves~(ii).
Recall that $|\sigma(x)|=2|u|+2$, so the distance between~$x$ and~$y$ along~$P$ is~$2|u|+2$, proving~(iv).

To prove~(v), consider a path~$P\in \cP_n$ with first vertex $x=(1,u,0,v)\in D_n$, where $u,v\in D$, and last vertex $y':=(u,0,1,v)\in D_n^-$.
We consider the cycle $C\in \cC_n$ containing the path~$P\circ 0$ and continue to follow this cycle.
The next edge of~$C$ after traversing~$P\circ 0$ flips the last bit, so from~$(y',0)$ we reach the vertex~$(y',1)$.
By~\eqref{eq:Cn-}, the path traversed by~$C$ until the last bit is flipped again is $\ol{\rev}(P')\circ 1$ for some $P'\in \cP_n$.
As the last vertex of~$P'$ is $\ol{\rev}(y')=(\ol{\rev}(v),0,1,\ol{\rev}(u))\in D_n^-$, its first vertex is $x':=(1,\ol{\rev}(v),0,\ol{\rev}(u))\in D_n$ by~(iv).
As the path $\ol{\rev}(P')\circ 1$ is traversed backwards by~$C$, the next vertex on~$C$ after traversing $\ol{\rev}(P')\circ 1$ is $(y,0)$ with $y:=\ol{\rev}(x')=(u,1,v,0)\in D_n$.
The distance between~$(x,0)$ and~$(y,0)$ along~$C$ is $(2|u|+2)+(2|v|+2)+2$ by (iv), which equals $2(|u|+|v|+2)+2=4n+2$.
This almost proves (v), assuming that $\cR_n=\emptyset$ in~\eqref{eq:Cn-}.
However, the total number of vertices visited by the paths $\cP_n\circ 0$ and $\ol{\rev}(\cP_n)\circ 1$ is~$(4n+2)|D_n|$.
As the cardinality of~$D_n$ is given by the $n$-th Catalan number~\cite{MR3467982}, this quantity equals~$2\binom{2n+1}{n}$, the total number of vertices of~$G_n$.
It follows that $\cR_n=\emptyset$ in~\eqref{eq:Cn-}, completing the proofs of~(i) and~(v).

Claim~(vi) is an immediate consequence of~(ii), (v), and the definition of plane trees.
\end{proof}

\section{Properties of the 6-cycles}
\label{sec:6cycles}

Proposition~\ref{prop:6cycles} below states all relevant properties of the set of 6-cycles~$\cS_n$ that we use.
To state the proposition, we say that $x,y \in D_n$ form a \emph{flippable pair}~$(x,y)$, if
\begin{equation}
\label{eq:xy}
\begin{aligned}
  x &= (1,u_1,1,u_2,\ldots,1,u_d,1,1,0,w,0,v_d,0,v_{d-1},0,\ldots,v_1,0,v_0) \enspace, \\
  y &= (1,u_1,1,u_2,\ldots,1,u_d,1,0,1,w,0,v_d,0,v_{d-1},0,\ldots,v_1,0,v_0)
\end{aligned}
\end{equation}
for some~$d\geq 0$ and $u_1,\ldots,u_d,v_0,\ldots,v_d,w\in D$.
In terms of rooted trees, the tree~$y$ is obtained from~$x$ by moving a pending edge from a vertex in the left subtree to its predecessor, see Figure~\ref{fig:xypair}.
We refer to $(1,1,0,w,0)$ and $(1,0,1,w,0)$ as \emph{flippable substrings of~$x$ and~$y$} corresponding to this flippable pair.
The corresponding subpaths are highlighted with gray boxes in the figure.
Note that a bitstring~$x$ may appear in multiple flippable pairs, as it may contain multiple flippable substrings.

\begin{figure}
\centering
\makebox[0cm]{
\includegraphics[scale=0.916]{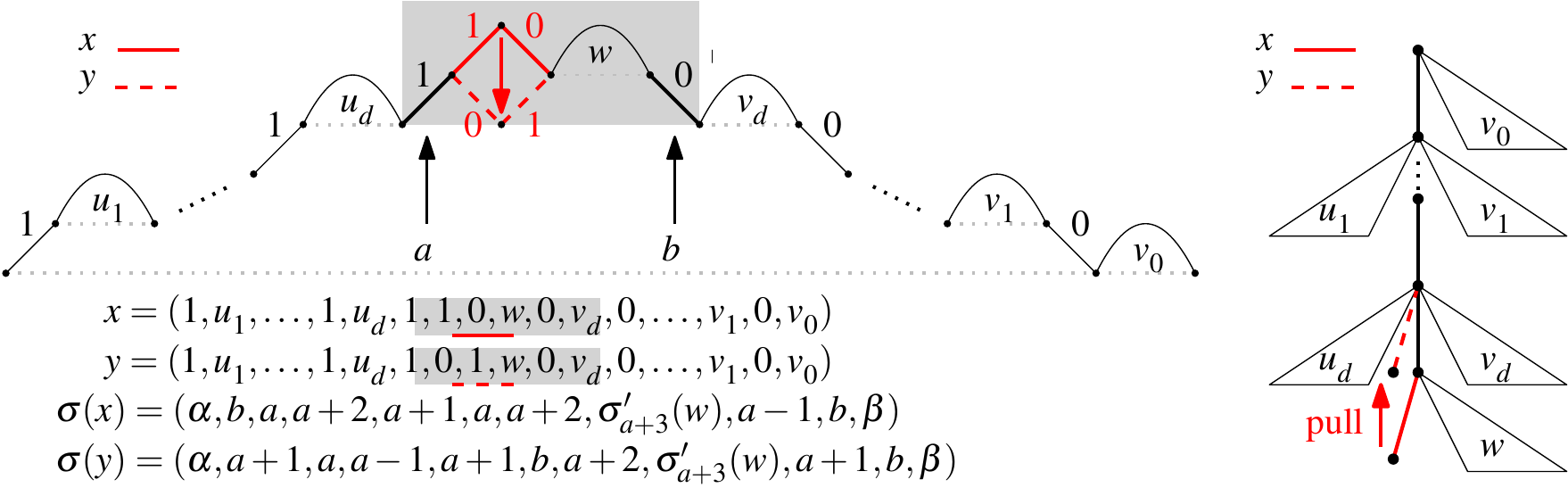}
}
\caption{A~flippable pair~$(x,y)$, its Dyck path interpretation (left) and rooted tree interpretation (right).}
\label{fig:xypair}
\end{figure}

Clearly, the set of 6-cycles~$\cS_n$ defined in Section~\ref{sec:def-hc} is given by considering all flippable pairs~$(x,y)$, $x,y\in D_n$, as in~\eqref{eq:xy}, by defining
\begin{equation}
\label{eq:c6xy}
  C_6(x,y):=(u_1,0,u_2,0,\ldots,u_d,0,1,*,*,w,*,v_d,1,v_{d-1},1,\ldots,v_1,1,v_0)
\end{equation}
and by taking the union of all 6-cycles~$C_6(x,y)\circ 0$.
Note here that~\eqref{eq:c6} and~\eqref{eq:c6xy} differ only in the additional 0-bit in the end.
In particular, all 6-cycles~$\cS_n$ that we use to join the cycles in the 2-factor~$\cC_n$ belong to the subgraph of~$G_n$ given by all vertices whose last bit equals~0.

\begin{proposition}
\label{prop:6cycles}
For any~$n\geq 1$, the 6-cycles~$C_6(x,y)$ defined in~\eqref{eq:c6xy} have the following properties:
\begin{enumerate}[label=(\roman*)]
\item
Let~$(x,y)$ be a flippable pair.
The symmetric difference of the edge sets of the two paths~$P_\sigma(x)$ and~$P_\sigma(y)$ with the 6-cycle~$C_6(x,y)$ gives two paths~$P'(x)$ and~$P'(y)$ on the same set of vertices as~$P_\sigma(x)$ and~$P_\sigma(y)$, where $P'(x)$ starts at~$x$ and ends at the last vertex of~$P_\sigma(y)$, and $P'(y)$ starts at~$y$ and ends at the last vertex of~$P_\sigma(x)$.

\item
Let~$(x,y)$ be a flippable pair and let~$a$ be the starting position of the corresponding flippable substring in~$x$.
The 6-cycle~$C_6(x,y)$ intersects~$P_\sigma(x)$ in the $(2a-1)$-th and the $(2a+4)$-th edge, and it intersects~$P_\sigma(y)$ in the $(2a-1)$-th edge.

\item
For any flippable pairs~$(x,y)$ and~$(x',y')$, the 6-cycles~$C_6(x,y)$ and~$C_6(x',y')$ are edge-disjoint.

\item
For any flippable pairs~$(x,y)$ and~$(x,y')$, the two pairs of edges that the two 6-cycles~$C_6(x,y)$ and~$C_6(x,y')$ have in common with the path~$P_\sigma(x)$ are not interleaved, but one pair appears before the other pair along the path.
\end{enumerate}
\end{proposition}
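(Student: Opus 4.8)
The plan is to establish (ii) first --- the explicit description of how $C_6(x,y)$ meets the two paths --- since (i) and (iv) then fall out cheaply, and to reserve the real effort for (iii).

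\emph{Part (ii).} Write the flippable pair as in \eqref{eq:xy}, put $\beta:=a+|w|+3$, and observe that $(x_a,\dots,x_\beta)=(1,1,0,w,0)=(1,w'',0)$ with $w'':=(1,0,w)\in D$, so $(a,\beta)$ is a base pair of $x$ in the sense of the proof of Proposition~\ref{prop:2factor}. Hence by \eqref{eq:xa} the vertex $p_x^{(2a-1)}$ equals $(u_1,0,\dots,u_d,0,1,1,0,w,0,v_d,1,\dots,v_1,1,v_0)$, which coincides with the string \eqref{eq:c6xy} encoding $C_6(x,y)$ outside the three $*$-positions $a+1,a+2,\beta$ and also at position $a$ (both read $1$). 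Unwinding \eqref{eq:sigma-rec} for this base pair shows that the first six bitflips of $P_\sigma(x)$ after $p_x^{(2a-1)}$ are $\beta,a,a+2,a+1,a,a+2$; tracking the triple of bits at positions $(a+1,a+2,\beta)$ together with the bit at position $a$ along these flips, one finds that $p_x^{(2a-1)},p_x^{(2a)},p_x^{(2a+4)},p_x^{(2a+5)}$ are exactly the vertices among $p_x^{(2a-1)},\dots,p_x^{(2a+5)}$ lying on $C_6(x,y)$, with combos $100,101,011,001$. For $y$ the substring $(y_a,y_{a+1})=(1,0)$ is likewise a base pair, and the analogous computation via \eqref{eq:xa} puts $p_y^{(2a-1)},p_y^{(2a)}$ on $C_6(x,y)$ with combos $010,110$. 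Since distinct members of $\cP_n$ are vertex-disjoint (being, after deleting the trailing $0$, the path components of $\cC_n^-$), the combo-$110$ and combo-$010$ vertices of $C_6(x,y)$ cannot lie on $P_\sigma(x)$ and the combo-$100,101,011,001$ vertices cannot lie on $P_\sigma(y)$; therefore these four (resp.\ two) vertices are \emph{all} the vertices of $C_6(x,y)$ on $P_\sigma(x)$ (resp.\ $P_\sigma(y)$), the only consecutive pairs among them are $\{p_x^{(2a-1)},p_x^{(2a)}\}$, $\{p_x^{(2a+4)},p_x^{(2a+5)}\}$ along $P_\sigma(x)$ and $\{p_y^{(2a-1)},p_y^{(2a)}\}$ along $P_\sigma(y)$, and each such pair spans an edge of $C_6(x,y)$ --- which is (ii).

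\emph{Parts (i) and (iv).} By (ii), $C_6(x,y)$ is the $6$-cycle $100\text{-}101\text{-}001\text{-}011\text{-}010\text{-}110\text{-}100$ (labelled by combos); two of its edges, $\{100,101\}$ and $\{001,011\}$, lie on $P_\sigma(x)$, one, $\{010,110\}$, lies on $P_\sigma(y)$, and the remaining three, $\{110,100\},\{101,001\},\{011,010\}$, lie on neither. As $P_\sigma(x)$ and $P_\sigma(y)$ are vertex-disjoint, $P_\sigma(x)\bigtriangleup P_\sigma(y)\bigtriangleup C_6(x,y)$ is obtained by deleting those three path-edges and inserting the other three $6$-cycle edges: deleting the two edges of $P_\sigma(x)$ cuts it into subpaths running $x\cdots 100$, $101\cdots 011$, $001\cdots(\text{last of }P_\sigma(x))$, deleting the one edge of $P_\sigma(y)$ cuts it into $y\cdots 010$ and $110\cdots(\text{last of }P_\sigma(y))$, and the three inserted edges reassemble these five pieces into $x\cdots 100\,110\cdots(\text{last of }P_\sigma(y))$ and $y\cdots 010\,011\cdots 101\,001\cdots(\text{last of }P_\sigma(x))$, i.e.\ $P'(x)$ and $P'(y)$ as in (i) (degenerate single-vertex pieces occur when $a=1$ or when $2a+5$ is the last index of $P_\sigma(x)$, but cause no trouble). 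For (iv): two distinct flippable pairs $(x,y),(x,y')$ correspond to distinct flippable substrings of $x$, starting at positions $a\ne a'$; a flippable substring starting at $t$ forces $x_t=x_{t+1}=1$, $x_{t+2}=0$, and these patterns are mutually incompatible when $|a-a'|\in\{1,2\}$, so $|a-a'|\ge 3$ and by (ii) the index pairs $\{2a-1,2a+4\}$ and $\{2a'-1,2a'+4\}$ are disjoint with one lying entirely before the other along $P_\sigma(x)$.

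\emph{Part (iii) (the main obstacle).} I would show that if $C_6(x,y)$ and $C_6(x',y')$ share an edge then $(x,y)=(x',y')$. Encode $C_6(x,y)$ by the string $z$ of \eqref{eq:c6}; its $*$-positions form a triple $J=\{m,m+1,m+2+2k\}$ with $z_{m-1}=1$ the explicit $1$ of \eqref{eq:c6}, $z_{m-2}=0$ when $d\ge 1$, and the hill $w$ occupying positions $m+2,\dots,m+1+2k$, while the remaining bits of $z$ are determined by the $6$-cycle; likewise $z'$ and $J'$ for $C_6(x',y')$. If a common edge $e$ flips coordinate $j\in J\cap J'$, then a vertex of $e$ carries the bits of $z$ off $J$ and of $z'$ off $J'$, so $z$ and $z'$ agree at every position that is a $*$ in neither; hence $z=z'$ --- whence $C_6(x,y)=C_6(x',y')$ and the flippable pairs coincide --- as soon as $J=J'$, and it remains to force $J=J'$. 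If $e\in\cC_n$ (identifying the $6$-cycles and paths with their images in $G_n$ under appending a $0$), then by (ii) $e$ is the $(2a-1)$-th or $(2a+4)$-th edge of $P_\sigma(x)$ or the $(2a-1)$-th edge of $P_\sigma(y)$, and likewise for $(x',y')$; since distinct members of $\cP_n$ are edge-disjoint this identifies two of $P_\sigma(x),P_\sigma(y),P_\sigma(x'),P_\sigma(y')$, and matching the (parity-constrained) edge indices, together with the remark that $x$ --- or $y$ --- cannot carry a $(1,1,0)$- and a $(1,0,1)$-pattern at the same position, forces $(x,y)=(x',y')$. If $e\notin\cC_n$, one works directly with the two endpoints of $e$ written from \eqref{eq:c6} in both encodings: the marker $z_{m-1}=1$, the marker $z_{m-2}=0$, and the fact that $w,w'$ are Dyck words (so they begin with $1$, end with $0$, and never dip below their starting level) pin down $m=m'$ and $k=k'$ in every configuration except $m=m'$, $k<k'$; there one uses that the part of $z$ just past its third $*$ has the form $v_d,1,v_{d-1},1,\dots$ and that the initial segment $w$ of $w'$ is Dyck, to deduce that $z'$'s third $*$ would land on a $0$, contradicting the combo of the endpoint of $e$. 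I expect this final case distinction, and within it the subcase $m=m'$, $k<k'$, to be the only genuinely delicate point of the whole proof.
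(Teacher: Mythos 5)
Parts (i), (ii), and (iv) of your proposal are correct and follow essentially the same computation as the paper: you locate the four vertices of $C_6(x,y)$ on $P_\sigma(x)$ and the two on $P_\sigma(y)$ via the base-pair observation (giving the combos $100,101,011,001$ and $010,110$), identify the three path-edges, and then read off both (i) and (iv) from that explicit picture. This matches the paper's $\alpha,\gamma,\delta$ bookkeeping in substance. Your $|a-a'|\ge 3$ observation for (iv) is also the right one.

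Part (iii) is where the proposal has a genuine gap, and you effectively admit it. Your $e\in\cC_n$ case (shared path-edge) is fine: edge-disjointness of the paths plus the parity of the indices $2a-1,2a+4,2a'-1,2a'+4$ and the impossibility of $x$ carrying a $(1,1,0)$- and a $(1,0,1)$-pattern at the same position do force $(x,y)=(x',y')$. But for $e\notin\cC_n$ you switch strategies and try to compare the two $\{0,1,*\}$-encodings $z,z'$ directly, reducing to showing $J=J'$; the argument sketched for that (markers $z_{m-1},z_{m-2}$, Dyck conditions on $w,w'$, the ``$m=m'$, $k<k'$'' subcase) is neither carried out nor clearly easier than the problem itself. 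This is a missing step, not a cosmetic one: it is precisely the content of (iii). The paper avoids this entirely by exploiting what your own part (ii) already gives you: \emph{all six} vertices of $C_6(x,y)$ lie on $P_\sigma(x)\cup P_\sigma(y)$ (not just the endpoints of the three path-edges), so any edge shared by $C_6(x,y)$ and $C_6(x',y')$ lies on, or has both endpoints on, paths drawn from $\{x,y\}\cap\{x',y'\}$; one then fixes such a common $z$, places the relevant vertices of both 6-cycles on $P_\sigma(z)$ at the positions $2a-1,2a,2a+4,2a+5$ (resp.\ $2a'-1,\dots$) given by (ii), and checks the handful of cases $a'-a\in\{1,3\}$, the only ones the $(1,1,0)$-pattern allows. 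In particular, your ``chord'' edge $\{101,001\}$ has both endpoints on $P_\sigma(x)$ and so is subsumed by this path-local analysis; there is no need to reason about the encoding strings at all. I suggest you replace the $e\notin\cC_n$ sketch with this path-based case analysis, which is uniform across edge types and turns the remaining work into a short, checkable position comparison.
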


Informally, the first property asserts that a 6-cycle from~$\cS_n$ can be used to join two cycles from the 2-factor~$\cC_n$ to a single cycle, see Figure~\ref{fig:c6xy}.
The last two properties ensure that no two 6-cycles interfere with each other when iterating this joining operation.

\begin{figure}
\centering
\makebox[0cm]{
\includegraphics[scale=0.916]{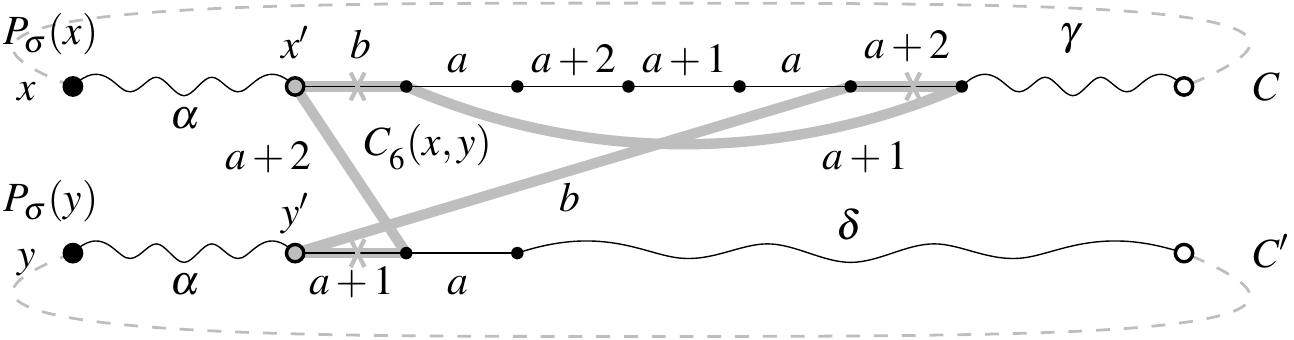}
}
\caption{
Two cycles from our 2-factor joined by taking the symmetric difference with a 6-cycle.
The paths~$P_\sigma(x)$ and~$P_\sigma(y)$ (solid black) lying on the two cycles traverse the 6-cycle~$C_6(x,y)$ (solid gray) as shown.
The symmetric difference yields paths $P'(x)=P(x,\tau(x))$ and $P'(y)=P(y,\tau(y))$ that have flipped end vertices.
}
\label{fig:c6xy}
\end{figure}

\begin{proof}
To prove (i), consider a flippable pair~$(x,y)$ as in~\eqref{eq:xy}, and let~$a$ and~$b$ be the first and last position of the corresponding flippable substring $(1,1,0,w,0)$ in~$x$, respectively.
Applying the definition~\eqref{eq:sigma-rec}, a straightforward computation yields the bitflip sequences
\begin{subequations}
\begin{equation}
\label{eq:sigmaxy}
\begin{aligned}
  \sigma(x) &= (\alpha,b,a,a+2,a+1,a,a+2,\gamma) \enspace, \\
  \sigma(y) &= (\alpha,a+1,a,\delta) \enspace,
\end{aligned}
\end{equation}
where if~$d=0$ we define
\begin{equation}
  \alpha:=\beta:=\delta:=() \;\; \text{and} \;\; \gamma:=(\sigma_{a+3}(w)) \enspace.
\end{equation}
On the other hand, if~$d>0$ then~$\alpha$ is the longest common prefix of~$\sigma(x)$ and~$\sigma(y)$, $(b,\beta)$ is their longest common suffix, and
\begin{equation}
\begin{aligned}
  \gamma:=(\sigma_{a+3}(w),a-1,b,\beta) \;\; \text{and} \;\;
  \delta:=(a-1,a+1,b,a+2,\sigma_{a+3}(w),a+1,b,\beta) \enspace.
\end{aligned}
\end{equation}
\end{subequations}
Note that $(\alpha,\beta)=\sigma(1,u_1,1,u_2,\ldots,1,u_d,v_d,0,v_{d-1},0,\ldots,v_1,0)$ and that $|\alpha|=2(d+\sum_{i=1}^d |u_i|)=2(a-1)=2a-2$.
The last relation expresses that we count two flip operations for each of the steps from the hills $u_1,u_2,\ldots,u_d$, one flip for each of the $d$~upsteps preceding the hills~$u_i$, and one flip for each of the $d$~downsteps following the hills~$v_i$.
Specifically, the vertices~$x'$ and~$y'$ that are reached from~$x$ or~$y$ by flipping all $2a-2$ bit positions in the sequence~$\alpha$ are
\begin{equation}
\label{eq:xyp}
\begin{aligned}
  x' &= (u_1,0,u_2,0,\ldots,u_d,0,1,1,0,w,0,v_d,1,v_{d-1},1,\ldots,v_1,1,v_0) \enspace, \\
  y' &= (u_1,0,u_2,0,\ldots,u_d,0,1,0,1,w,0,v_d,1,v_{d-1},1,\ldots,v_1,1,v_0) \enspace.
\end{aligned}
\end{equation}
Comparing~\eqref{eq:c6xy} and~\eqref{eq:xyp} shows that these vertices belong to the 6-cycle~$C_6(x,y)$.
From~\eqref{eq:sigmaxy} we observe that the 6-cycle~$C_6(x,y)$ is then traversed as depicted in Figure~\ref{fig:c6xy}.
In particular, $x'$ and~$y'$ are the first vertices from the paths~$P_\sigma(x)$ and~$P_\sigma(y)$ hitting the 6-cycle.
By taking the symmetric difference of these edge sets, we obtain paths~$P'(x)$ and~$P'(y)$ on the same vertex set as~$P_\sigma(x)$ and~$P_\sigma(y)$ with flipped end vertices.
Formally, $P'(x)$ and~$P'(y)$ are obtained by starting at~$x$ and~$y$ and flipping bits according to the modified bitflip sequences
\begin{align*}
  \tau(x) &:= (\alpha,a+2,a,\delta) \enspace, \\
  \tau(y) &:= (\alpha,b,a,a+1,a+2,a,a+1,\gamma) \enspace,
\end{align*}
respectively.
This proves~(i).

Recall from the previous argument that the distance between~$x$ and~$x'$ along the path~$P_\sigma(x)$ is $|\alpha|=2a-2$, and the same holds for the distance between~$y$ and~$y'$ along the path~$P_\sigma(y)$.
The 6-cycle~$C_6(x,y)$ intersects~$P_\sigma(x)$ in the next edge after~$x'$ and in the edge that is five edges further away, and it intersects~$P_\sigma(y)$ in the next edge after~$y'$.
Combining these facts proves~(ii).

To prove (iii), consider two 6-cycles~$C_6(x,y)$ and~$C_6(x',y')$.
Instead of comparing them directly, we consider how they intersect a fixed path~$P_\sigma(z)$ with $z\in \{x,y\}\cap\{x',y'\}$.
This is possible because all edges of these 6-cycles either lie on such a path or they go between two such paths.
Consider the two flippable substrings of~$z$ corresponding to~$C_6(x,y)$ and~$C_6(x',y')$ starting at positions~$a$ and~$a'$ in~$z$, respectively.
We assume w.l.o.g.\ that~$a'\geq a+1$.

We first consider the case~$z=y$ and $z\in\{x',y'\}$.
By~(ii) we know that the 6-cycle~$C_6(x,y)$ intersects the path~$P_\sigma(y)$ in the edge~$2a-1$.
However, we also have $2a'-1\geq 2(a+1)-1=2a+1$, so the edge(s) that the cycle~$C_6(x',y')$ has in common with~$P_\sigma(y)$ are separated by at least one edge along the path, proving that the two 6-cycles do not share any vertices on this path.

We now consider the case~$z=x$ and $z\in\{x',y'\}$.
By~(ii) we know that the 6-cycle~$C_6(x,y)$ intersects the path~$P_\sigma(x)$ in the edges~$2a-1$ and~$2a+4$.
If $a'\geq a+4$, then we have $2a'-1\geq 2(a+4)-1=2a+7$, so the edges that the cycle~$C_6(x',y')$ has in common with~$P_\sigma(x)$ are separated by at least two edges along the path, proving that the two 6-cycles do not share any vertices on this path.
It remains to consider the subcases $a'\in\{a+1,a+2,a+3\}$.
The case $a'=a+2$ can be excluded, because this would mean that~$x$ has a 0-bit at position~$a+2$ and~$x$ has a 1-bit at position~$a'=a+2$ by~\eqref{eq:xy}, which is a contradiction.
If $a'=a+1$, then since~$x$ has a 0-bit at position~$a+2$, it follows from~\eqref{eq:xy} that~$y'=x$ and that the flippable substring of~$x$ corresponding to~$(x,y)$ has the form $(1,1,0,w,0)=(1,1,0,1,w',0,0)$.
Consequently, by~(ii) $C_6(x',y')$ intersects the path~$P_\sigma(x)$ in the edge $2a'-1=2(a+1)-1=2a+1$, which is separated by at least one edge from both edges~$2a-1$ and~$2a+4$, so the two 6-cycles do not share any vertices on this path.
If $a'=a+3$, then either of the two cases~$x'=x$ or~$y'=x$ can occur, and in both cases the cycle~$C_6(x',y')$ intersects~$P_\sigma(x)$ in the edge $2a'-1=2(a+3)-1=2a+5$, and if~$x'=x$ also in the edge $2a'+4 = 2a+10$ (which is safe for sure).
The edge~$2a+5$ is different from the edge~$2a+4$ on~$P_\sigma(x)$, but both share an end vertex, so the other two edges of the 6-cycles~$C_6(x,y)$ and~$C_6(x',y')$ starting at this vertex and not belonging to~$P_\sigma(x)$ could be identical.
However, this is not the case as the corresponding edge from~$C_6(x,y)$ leads back to~$P_\sigma(x)$, whereas the corresponding edge from~$C_6(x',y')$ leads to~$P_\sigma(y')$ if~$x'=x$ and to~$P_\sigma(x')$ if~$y'=x$.

This completes the proof of (iii).

The previous analysis in the last case where $z=x=x'$ also proves (iv).
\end{proof}

\section{Proof of Theorem~\ref{thm:main}}
\label{sec:proof}

With Propositions~\ref{prop:2factor} and \ref{prop:6cycles} in hand, we are now ready to prove Theorem~\ref{thm:main}.

\begin{proof}[Proof of Theorem~\ref{thm:main}]
Let~$\cC_n$ and~$\cS_n$ be the 2-factor and the set of 6-cycles defined in Section~\ref{sec:def-hc}.

Consider two different cycles $C,C'\in \cC_n$ containing paths $P\circ 0\seq C$ and $P'\circ 0\seq C'$, where $P,P'\in\cP_n$, with first vertices $x,y\in D_n$, respectively, such that~$(x,y)$ is a flippable pair.
By Proposition~\ref{prop:6cycles}~(i), the symmetric difference of the edge sets $(C\cup C')\bigtriangleup (C_6(x,y)\circ 0)$ forms a single cycle on the same vertex set as~$C\cup C'$, i.e., this joining operation reduces the number of cycles in the 2-factor by one, see Figure~\ref{fig:c6xy}.
Recall from~\eqref{eq:xy} that in terms of rooted trees, the tree~$y$ is obtained from~$x$ by moving a pending edge from a vertex in the left subtree to its predecessor.
We refer to this as a \emph{pull operation}, see Figure~\ref{fig:xypair}.

We repeat this joining operation until all cycles in the 2-factor are joined to a single Hamilton cycle.
For this purpose we define an auxiliary graph~$\cH_n$ whose nodes represent the cycles in the 2-factor~$\cC_n$ and whose edges connect pairs of cycles that can be connected to a single cycle with such a joining operation that involves a 6-cycle from the set~$\cS_n$, see Figure~\ref{fig:rot}.
Formally, the node set of~$\cH_n$ is given by partitioning the set of all rooted trees with $n$~edges into equivalence classes under tree rotation.
By Proposition~\ref{prop:2factor}~(v) and (vi), each cycle~$C$ of~$\cC_n$ can be identified with one equivalence class under tree rotation, so the nodes of~$\cH_n$ indeed correspond to the cycles in the 2-factor~$\cC_n$.
Specifically, each rooted tree belonging to some node of~$\cH_n$ equals the first vertex $x\in D_n$ of some path $P\in \cP_n$ such that~$P\circ 0$ lies on the cycle corresponding to that node.
For every flippable pair~$(x,y)$, $x,y\in D_n$, we add the edge to~$\cH_n$ that connects the node containing the tree~$x$ to the node containing the tree~$y$.
In Figure~\ref{fig:rot}, those edges are drawn as solid arrows directed from~$x$ to~$y$.
By our initial argument, such a flippable pair yields a 6-cycle~$C_6(x,y)$ that can be used in~$G_n$ to join the two corresponding cycles to a single cycle.
Note that~$\cH_n$ may contain multiple edges or loops.

To complete the proof, it therefore suffices to prove that the graph~$\cH_n$ is connected.
Indeed, if~$\cH_n$ is connected, then we can pick a spanning tree in~$\cH_n$, corresponding to a collection of 6-cycles $\cT_n\seq \cS_n$, such that the symmetric difference between the edge sets $\cC_n\bigtriangleup \cT_n$ forms a Hamilton cycle in~$G_n$.
Here we need properties (iii) and (iv) in Proposition~\ref{prop:6cycles}, which ensure that whatever subset of 6-cycles we use in this joining process, they will not interfere with each other, guaranteeing that inserting each 6-cycle indeed reduces the number of cycles by one, as desired.

At this point we have reduced the problem of proving that~$G_n$ has a Hamilton cycle to showing that the auxiliary graph~$\cH_n$ is connected, which is much easier.
Indeed, all we need to show is that any rooted tree with $n$~edges can be transformed into any other tree by a sequence of rotations and pulls, and their inverse operations.
Recall that rotations correspond to following the same cycle from~$\cC_n$ (staying at the same node in~$\cH_n$), and a pull corresponds to a joining operation (traversing an edge in~$\cH_n$ to another node).
For this we show that any rooted tree~$x$ can be transformed into the special tree $s:=(1,1,0,1,0,\ldots,1,0,0)\in D_n$, i.e., a star with $n$~rays rooted at a leaf, by a sequence of rotations and pulls.
This can be achieved by rotating~$x$ until it is rooted at a leaf.
Now the left subtree is the entire tree, so we can repeatedly pull pending edges towards the unique child of the root until we end up at the star~$s$.

This completes the proof.
\end{proof}

\bibliographystyle{alpha}
\bibliography{../refs}

\end{document}